\documentclass[12pt]{article}

\usepackage{amsmath,amssymb,amsthm}
\usepackage{mathrsfs}
\usepackage{authblk}

\usepackage[left=2cm,right=2cm,top=2cm,bottom=2cm,bindingoffset=0cm]{geometry}
\usepackage{nccfoots}
\usepackage[hang,flushmargin]{footmisc} 
\usepackage{hyperref}

\usepackage{graphicx}
\usepackage{pgf,tikz}

\newtheorem{proposition}{Proposition}
\newtheorem{lemma}{Lemma}
\newtheorem{corollary}{Corollary}

\newtheorem{theorem}{Theorem}
\newtheorem{problem}{Problem}
\newtheorem{definition}{Definition}

\newtheorem{question}{Question}

\newcommand{\St}{{\mathcal St}}

\title{A short proof of Oblakov's theorem}

\author[1]{Danila Cherkashin}

\affil[1]{Institute of Mathematics and Informatics, Bulgarian Academy of Sciences, Sofia}

\begin{document}

\maketitle

\begin{abstract}
We give a short proof that for a given planar set of terminals $P$ there is at most one locally minimal tree with prescribed directions at $P$ (i.e. two locally minimal trees cannot coincide in $B_\varepsilon(P)$).

The new ingredient is a combinatorial result which says that a certain embedding of a bipartite cubic graph has the same numbers of balanced and unbalanced vertices.
\end{abstract}

\section{Introduction}

Consider the following form of the Steiner (tree) problem:
\begin{problem}\label{Problem1}
For a given finite set $P = \{x_1,\dots ,x_n\} \subset \mathbb{R}^2$ to find a connected set $\St$ of minimal length (one-dimensional Hausdorff measure) containing $P$.
\end{problem}

A solution to Problem~\ref{Problem1} is called a \textit{Steiner tree}.
It is known that such an $\St = \St(P)$ always exists (but is not necessarily unique) and that it is a union of a finite set of segments. Moreover, $\St$ can be represented as a graph, embedded into the Euclidean plane, such that its set of vertices contains $P$ and all its edges are straight line segments. This graph is connected and does not contain cycles, i.e. is a tree, which explains the naming of $\St$. It is known that the maximal degree of the vertices of $\St$ is at most $3$. Moreover, only vertices $x_i$ can have degree $1$ or $2$, all the other vertices have degree $3$ and are called \textit{Steiner points} while the vertices $x_i$ are called \textit{terminals}. 
Vertices of degree $3$ are called \textit{branching points}.
The angle between any two adjacent edges of $\St$ is at least $2\pi/3$.
That means that for a branching point the angle between any two segments incident to it is exactly $2\pi/3$.

The number of Steiner points in $\St$ does not exceed $n-2$. A Steiner tree with exactly $2n-2$ vertices is called \textit{full}.  
Every terminal point of a full Steiner tree has degree one.

For a given finite set $P$ consider a connected acyclic set $S$ containing $P$.
Then $S$ is called a \textit{locally minimal tree} if $\overline{S \cap B_\varepsilon (x)}$ is a Steiner tree for 
$(\{x\} \cap P) \cup (S \cap \partial B_\varepsilon (x))$ for every point $x \in S$ and small enough $\varepsilon>0$. Clearly every Steiner tree is locally minimal and not vice versa.
Locally minimal trees have all the mentioned properties of Steiner trees except the minimal length condition. So locally minimal trees inherit the definitions of terminals, Steiner points, branching points and fullness.
A proof of the listed properties of Steiner and locally minimal trees together with additional information on them can be found in the
book~\cite{hwang1992steiner} and in the article~\cite{gilbert1968steiner}.

The Steiner problem may have several solutions starting with $n = 4$ (see Fig.~\ref{fig1}).

\begin{figure}
    \centering
    \begin{tikzpicture}
    \def\r{1.5cm}
    \draw[ultra thick, blue]
        (-\r, \r) coordinate(x1) node[black, above right]{$1$} --++ (-60:{\r/cos(30)}) coordinate (x5);
    \draw[ultra thick, blue]
        (\r,\r) coordinate(x2) node[black, above left]{$2$} --++ (-120:{\r/cos(30)}) coordinate (x6);
    \draw[ultra thick, blue]
        (\r, -\r) coordinate(x3) node[black, below left]{$3$} --++ (120:{\r/cos(30)});
    \draw[ultra thick, blue]
        (-\r,-\r) coordinate(x4) node[black, below right]{$4$} --++ (60:{\r/cos(30)});
    \draw[ultra thick, blue]
        (x5) -- (x6);
    \foreach \x in{1,2,...,6}{
        \fill (x\x) circle (2pt);
    }
\end{tikzpicture}
\hspace{2cm}
\begin{tikzpicture}
    \def\r{1.5cm}
    \draw[ultra thick, blue]
        (-\r, \r) coordinate(x1) node[black, above right]{$1$} --++ (-30:{\r/cos(30)}) coordinate (x5);
    \draw[ultra thick, blue]
        (\r,\r) coordinate(x2) node[black, above left]{$2$} --++ (-150:{\r/cos(30)});
    \draw[ultra thick, blue]
        (\r, -\r) coordinate(x3) node[black, below left]{$3$} --++ (150:{\r/cos(30)}) coordinate (x6);
    \draw[ultra thick, blue]
        (-\r,-\r) coordinate(x4) node[black, below right]{$4$} --++ (30:{\r/cos(30)});
    \draw[ultra thick, blue]
        (x5) -- (x6);
    \foreach \x in{1,2,...,6}{
        \fill (x\x) circle (2pt);
    }
\end{tikzpicture}
    \caption{An example of a non-unique solution. The labeled points form a square.}
    \label{fig1}
\end{figure}

A \textit{topology} $T$ of a labeled Steiner tree (or a labeled locally minimal tree) $\St$ is the corresponding abstract graph with labeled terminals and unlabeled Steiner points.

\begin{proposition}[Melzak,~\cite{melzak1961problem}]
For every topology $T$ there is at most one locally minimal tree for a configuration $P$ with topology $T$. 
\label{melzakuniq}
\end{proposition}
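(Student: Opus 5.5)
The plan is to argue by induction on the number of terminals $n$, using Melzak's reduction. Before that, reduce to the full case. If a locally minimal tree with topology $T$ has a terminal of degree $2$ or $3$, then $T$ itself records this: $T$ is labeled, so the degree of every terminal is fixed by $T$. Split both trees at every terminal of degree at least $2$. This produces full components, each with its own sub-topology and its own terminal set determined by $T$. If uniqueness holds for each full component, the whole tree is unique. It therefore suffices to treat full topologies, where every terminal has degree one and every Steiner point has degree three, all angles at Steiner points being $2\pi/3$.

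For a full topology with $n \ge 3$, a counting argument on the tree shows there is a Steiner point $s$ adjacent to two terminals $x_i, x_j$; call this a cherry. In any locally minimal tree $S$ with topology $T$, the angle $\angle x_i s x_j$ equals $2\pi/3$. So $s$ lies on one of the two arcs of points seeing $[x_i x_j]$ under angle $2\pi/3$. The third edge at $s$ continues along the bisector direction. Extended beyond $s$, this direction passes through the apex $z$ of the equilateral triangle on $x_i x_j$ built on the opposite side of the arc. Which of the two sides is meant is fixed by the cyclic order at $s$. That order is not determined by the abstract $T$ alone, so I will treat both choices and show each gives at most one tree.

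The key elementary fact is Melzak's: by Ptolemy's theorem, $|s x_i|+|s x_j| = |s z|$ for $s$ on that arc, and the segment from $s$ away from $z$ is collinear with $z s$. Hence replacing the cherry $\{x_i, x_j, s\}$ by the single terminal $z$ yields a locally minimal tree $S'$ for $P' = (P\setminus\{x_i,x_j\})\cup\{z\}$. Its topology $T'$ is $T$ with the cherry contracted. Conversely, $S$ is recovered from $S'$: the point $s$ is forced to be the intersection of the edge of $S'$ at $z$ with the circumcircle arc. By induction, $S'$ is unique for each of the two choices of $z$, and so $S$ is unique for each choice.

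The main obstacle is excluding two valid trees arising from the two different choices of $z$, which amount to the two orientations at $s$. To handle this, I would observe that the orientation of $x_i, x_j$ around $s$ together with the planar embedding is rigid. Alternatively, strengthen the induction hypothesis to "at most one tree with topology $T$", where both orientation choices are tried and one shows they cannot both succeed. For that I would compare the directions at $x_i$. In one choice, $s$ lies on the arc on one side of the line $x_i x_j$; in the other, on the opposite side. Both trees would then be locally minimal with the same topology, and the standard length-variation argument applies. The length function on trees of fixed topology is convex in the Steiner point positions, and strictly convex when full. So two distinct critical points, that is, two locally minimal configurations, cannot coexist.

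On reflection, this convexity argument is cleaner and avoids the orientation issue entirely. Parametrize trees of topology $T$ by the Steiner point coordinates in $\mathbb{R}^{2(n-2)}$. The length $L$ is a sum of Euclidean norms of affine functions, hence convex. Local minimality means the angle conditions hold, which are exactly the first-order conditions $\nabla L=0$. So every locally minimal tree is a global minimizer over this convex parameter space. If two existed, $L$ would be constant on the segment between them, forcing each edge vector to change linearly in a way that keeps every norm affine. That makes each edge direction constant along the segment. Starting from a terminal edge, where one endpoint is fixed and the direction is constant, the Steiner point can only slide along the edge. Propagating this with the $2\pi/3$ angles through the tree shows every Steiner point is fixed. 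The delicate step is this final propagation, which I would handle by peeling cherries as above.
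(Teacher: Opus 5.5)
The paper does not prove this proposition; it only cites Melzak, whose method is the equilateral-point reduction you start with. Your final argument is correct and takes a genuinely different route. $L$ is convex on all of $\mathbb{R}^{2k}$, degenerate and self-crossing positions of the $k$ Steiner points included. A locally minimal tree is a point where $L$ is differentiable, and there $\nabla L=0$ is exactly the $2\pi/3$ condition, so the tree is a global minimizer. For two such trees, $L$ is constant on the segment joining them, so every edge term is affine there. The equality case of the triangle inequality then makes each edge of one tree a positive multiple of the corresponding edge of the other; this uses that edge vectors are nonzero at both endpoints. Finally the Steiner points are pinned down one by one.

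Melzak's reduction has the advantage of being constructive. But, as you correctly observed, it yields uniqueness only once the equilateral point is chosen at each reduction step, i.e.\ once the cyclic orders at the Steiner points are fixed. Moreover, the reduced tree $S'$ may cross itself, so that induction would have to run over immersed trees. Excluding two realizations of the same abstract topology with different orientations needs an extra argument. Convexity handles all orientation choices at once, needs no reduction to full topologies, and works verbatim in $\mathbb{R}^d$, which is the version the paper refers to via \cite{smith1992find}. It is also the same-topology analogue of Theorem~\ref{th:oblakov}: two trees with one topology are forced to be codirected along every edge, and then they coincide.

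Two things to clean up. First, the claim that $L$ is strictly convex for full topologies is false. Take $n=4$ with $s_1$ joined to $x_1,x_2$ and $s_2$ joined to $x_3,x_4$. For generic terminals you can slide $s_1$ inside $[x_1x_2]$ and $s_2$ inside $[x_3x_4]$ so that $s_2-s_1$ is only rescaled, and $L$ is affine along this motion. Your last paragraph rightly uses only the equality case at the two non-degenerate endpoints, so drop the strict-convexity sentence and the unsupported ``rigidity of orientation'' remark.

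Second, write the propagation explicitly instead of pointing back to Melzak's cherries. Suppose a Steiner point $s$ has two neighbours $p,q$ already known to be the same in both trees. Then in both trees $s\in(p+\mathbb{R}_{>0}u)\cap(q+\mathbb{R}_{>0}v)$ with the same directions $u,v$. These are non-parallel because they meet at angle $2\pi/3$, so $s$ is the same point in both trees. Such an $s$ always exists as long as some Steiner point is unfixed: take a leaf or isolated vertex of the forest spanned by the unfixed Steiner points.
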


Proposition~\ref{melzakuniq} allows us to define $S_T(P)$ as the locally minimal tree, if it exists.
We say that trees $S_{T_1}(P)$ and $S_{T_2}(P)$ are \textit{codirected at terminals} if for some $\varepsilon > 0$ we have
\[
S_{T_1}(P) \cap B_\varepsilon (P) = S_{T_2}(P) \cap B_\varepsilon (P), 
\]
where $B_\varepsilon(P)$ is an open $\varepsilon$-neighborhood of the terminal set $P$.
The following theorem generalizes the results from~\cite{ivanov2006uniqueness} from global minimizers to local minimizers.

\begin{theorem}[Oblakov~\cite{oblakov2009non}] \label{th:oblakov}
There are no two distinct topologies $T_1$ and $T_2$ and a planar configuration $P$ such that locally minimal trees $S_{T_1}(P)$ and $S_{T_2}(P)$ exist and are codirected at terminals.
\end{theorem}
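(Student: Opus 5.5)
We argue by contradiction: suppose $S_1=S_{T_1}(P)$ and $S_2=S_{T_2}(P)$ are distinct locally minimal trees that coincide in $B_\varepsilon(P)$. The plan is to superimpose them and study the planar graph $G=S_1\cup S_2$, extracting a bipartite cubic structure that violates the balance statement announced in the abstract.

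\textbf{Step 1 (reduction).} First reduce to the case where both trees are full and share no edges. If a terminal has degree $2$ or $3$ in $S_1$, codirection forces the same in $S_2$, and we split $P$ there into smaller configurations. If the trees share a whole segment through a Steiner point, the directions at that point coincide, so the two Steiner points coincide and we may cut there as well. Melzak's Proposition~\ref{melzakuniq} plus induction on $|P|$ then handles every piece on which the topologies agree. We may also discard common initial segments: each terminal edge starts in a common direction, and the first point where $S_1$ and $S_2$ diverge can be treated as a new terminal. After this, locally near each terminal the edges overlap, and elsewhere the edges of $S_1$ and $S_2$ cross only transversally.

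\textbf{Step 2 (the auxiliary graph).} All edges of locally minimal trees have directions in two classes: $\{\alpha,\alpha+2\pi/3,\alpha+4\pi/3\}$ up to sign, fixed at each branching point. Orient every edge of $S_1$ and $S_2$ away from a fixed root terminal. Branching points come in two types, according to whether the outgoing directions form a ``Y'' or an inverted ``Y''; along an edge the type alternates. Because directions at terminals agree, the direction set propagates, and both trees use the same three lines of directions.

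Now form the planar graph $H$ whose vertices are the Steiner points of $S_1$, the Steiner points of $S_2$, and the crossing points of $S_1$ with $S_2$. Its edges are pieces of segments, with terminal edges glued at the common stubs. Colour the vertices by type. I expect that, after suitably contracting the terminal stubs, $H$ becomes a bipartite cubic plane graph. Crossing points have degree $4$, so I would split each crossing into two degree-$3$ vertices, or treat it as an edge.

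\textbf{Step 3 (counting).} Call a vertex \emph{balanced} if its three edges, in the embedding, alternate in some prescribed sense: the angular pattern of its incident edges is locally symmetric, as at a genuine Steiner point. Call it \emph{unbalanced} otherwise, as at the artificial vertices coming from crossings. The key combinatorial lemma, which I would prove using Euler's formula and a rotation/turning-number count over the faces, states that in such an embedding of a bipartite cubic graph the numbers of balanced and unbalanced vertices are equal.

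To get the contradiction, count directly. The Steiner points of two full trees with $n$ terminals give $2(n-2)$ balanced vertices. Crossings give unbalanced vertices, and a separate count gives a different number, for instance using that each bounded face of $S_1\cup S_2$ turns by $2\pi$ with angles only $2\pi/3$ or $\pi/3$. A nonempty difference between the trees then forces an imbalance.

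\textbf{Main obstacle.} The hardest part is setting up the right notion of ``balanced'' so that the combinatorial lemma is both true and applicable, and checking that the geometric angle constraints feed into it. I would first try to get the lemma from a discrete Gauss--Bonnet argument: sum the exterior turning angles, each a multiple of $\pi/3$, around every face of $H$, then compare with $2\pi(V-E+F)$.
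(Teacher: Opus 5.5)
Your proposal has genuine gaps. The Step 1 reduction fails, the graph you build cannot be made cubic, and Step 3 has no mechanism that produces a contradiction.

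\textbf{The reduction in Step 1.} The configuration you try to discard is exactly the one the proof depends on. When a shared terminal stub ends, the two Steiner points do not coincide in general. If both trees branched at the same point with the same incoming direction, they would have the same three directions there and would not diverge at all. So the first divergence point is always a T-junction: a Steiner point of one tree lying in the interior of an edge of the other.

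Declaring that point a new terminal destroys codirection. One tree has a single edge there and the other has two edges at angle $2\pi/3$ (or degree $3$ versus degree $2$ if you keep the shared piece). So neither Melzak's proposition nor your induction applies. The picture ``overlap only near terminals, transversal crossings elsewhere'' is therefore not attainable. You must also handle overlaps in the middle of the trees and coincident Steiner points of opposite orientation (degree $6$).

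Using the union $S_1\cup S_2$ does not help. Terminals have degree $1$, and a degree-$4$ crossing cannot be split into two vertices with one edge in each direction, since each new vertex would have to lie on both crossing lines.

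The paper instead takes the \emph{symmetric difference} of the two trees, restricted to one triple of directions:
\begin{itemize}
\item the terminals disappear because the trees agree in $B_\varepsilon(P)$;
\item T-junctions become claws whose middle edge has the other colour;
\item lone Steiner points become monochromatic tripods;
\item crossings and degree-$6$ points are replaced by explicit gadgets that keep one edge per direction at every vertex and keep both colour classes forests.
\end{itemize}

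\textbf{Step 3.} The balance theorem gives $N_{\rm claw}=N_{\rm tripod}$, and you need an independent relation that violates it. Comparing $2(n-2)$ Steiner points with ``some other count'' of crossings gives nothing, because the number of crossings is unconstrained. Classifying vertices by origin is also wrong: Steiner points at T-junctions are claws, not balanced vertices.

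The missing idea is a coloured degree count. Every tripod is monochromatic, and every claw has its middle edge of the other colour. The red and blue parts of $H$ are forests because $S_1$ and $S_2$ are trees. Summing $d_1(T)=d_3(T)+2$ over all nontrivial components $T$ of both forests gives $N_{\rm claw}=N_{\rm tripod}+2t$. Together with the balance theorem this forces $t=0$, so $H=\emptyset$.

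\textbf{The other two ingredients are not supplied.}
\begin{itemize}
\item Bipartiteness does not follow from the `Y' versus inverted `Y' colouring, since claws fit neither class. The paper proves it cycle by cycle. The turning-angle sum shows the number of $2\pi/3$ and $4\pi/3$ angles is even. The $\pi/3$ and $5\pi/3$ angles are exactly the colour switches, so their number is even too.
\item The balance lemma is only announced. Any proof must use bipartiteness essentially, because non-bipartite drawings realise every multiple of $4$ as $N_{\rm claw}-N_{\rm tripod}$. The paper deletes one direction class, which leaves disjoint cycles, and counts angle types and inward/outward third edges along each cycle.
\end{itemize}
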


One of the applications of Theorem~\ref{th:oblakov} is that for $n \geq 4$ the set of $n$-point configurations for which the solution to the Steiner problem is not unique has the Hausdorff dimension at most $2n-1$ (as a subset of $\mathbb{R}^{2n}$), see~\cite{basok2024uniqueness}.

\section{New combinatorial argument}

The following combinatorial argument is self-contained and is not directly related to Steiner trees.
For a drawn cubic graph let us say that a vertex $v$ is a \textit{claw} 
if the three edges incident to $v$ lie in a common open half-plane whose boundary passes through $v$
and a \textit{tripod} otherwise, see Fig.~\ref{fig:clawpod}.

\begin{figure}[h]
    \centering
    \begin{tikzpicture}

  \begin{scope}
  
    \coordinate (a) at (0,0);
    \coordinate (b) at (0,-1);
    \coordinate (c) at (-0.866,-0.5);
    \coordinate (d) at (0.866,-0.5);

    \draw [ultra thick, blue] (c) -- (a) -- (d);
    \draw [ultra thick, blue] (b) -- (a);

    \filldraw (a) circle (2pt);

\end{scope}

  \begin{scope}[shift={(5,-0.5)}]

\coordinate (a) at (0,0);
    \coordinate (b) at (0,1);
    \coordinate (c) at (-0.866,-0.5);
    \coordinate (d) at (0.866,-0.5);

    \draw [ultra thick, blue] (c) -- (a) -- (d);
    \draw [ultra thick, blue] (b) -- (a);

    \filldraw (a) circle (2pt);
    
  \end{scope}
    
\end{tikzpicture}
    \caption{Local structure of $H$. A claw and a tripod}
    \label{fig:clawpod}
\end{figure}

\begin{theorem} \label{th:comb}
    Let $H$ be a cubic bipartite plane graph and let $l_1$, $l_2$, $l_3$ be pairwise non-parallel lines.
    Suppose that $H$ is drawn in a way that every vertex has incident edges parallel to $l_1$, $l_2$ and $l_3$ (one edge in each direction). 
    Then the number of claws in the drawing is equal to the number of tripods.
\end{theorem}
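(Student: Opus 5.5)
The plan is to turn the geometric claw/tripod distinction into a purely combinatorial one, namely the cyclic order of the three edge directions around a vertex, and then to count with Kempe cycles.

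\textbf{Reduction to a rotation sign.} An affine map of the plane preserves parallelism and open half-planes through a point, so it preserves the claw/tripod distinction. We may therefore assume that $l_1,l_2,l_3$ have directions $0$, $2\pi/3$, $4\pi/3$. Fix unit vectors $u_1,u_2,u_3$ along them with $u_1+u_2+u_3=0$.

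At a vertex $v$ the three edges point in directions $s_1u_1,s_2u_2,s_3u_3$ with $s_i=s_i(v)\in\{\pm1\}$. The vertex is a tripod exactly when $0$ lies in the interior of the convex hull of these three vectors, that is, when $s_1=s_2=s_3$. In the other six sign patterns it is a claw, with angles $\pi/3,\pi/3,4\pi/3$.

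Colour each edge by the index $i$ of the line it is parallel to. This is a proper $3$-edge-colouring of $H$. A direct check of the eight sign patterns shows:
\begin{itemize}
\item at a tripod the colours appear in counterclockwise order $(1,2,3)$, both for $+++$ and for $---$;
\item at every claw they appear in the opposite order $(1,3,2)$.
\end{itemize}
Define $\rho(v)=+1$ for the order $(1,2,3)$ and $\rho(v)=-1$ for the order $(1,3,2)$. The theorem then becomes the following combinatorial statement: for a $3$-edge-coloured bipartite cubic plane graph (embedded with straight edges), $\sum_v \rho(v)=0$.

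\textbf{Kempe cycles.} The edges of colours $1$ and $2$ form vertex-disjoint cycles covering all vertices. These cycles are even, because $H$ is bipartite, and they are Jordan curves in the plane. Traverse such a cycle $C=v_0v_1\dots v_{2m-1}$. At $v_k$ the traversal enters along one of the colours $1,2$ and leaves along the other, and which is which alternates with the parity of $k$. Let $\lambda(v_k)=\pm1$ record whether the colour-$3$ edge at $v_k$ leaves to the left or to the right of the traversal. Then
\[
\rho(v_k)=(-1)^k\lambda(v_k)\cdot \mathrm{const}_C .
\]
Equivalently, up to a global sign per cycle, $\rho(v)=\pm\lambda(v)$ with the $\pm$ given by the bipartition class of $v$. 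Orient every cycle counterclockwise, so that $\lambda=+1$ means the colour-$3$ edge at $v$ goes into the interior of the cycle through $v$.

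\textbf{Pairing by colour-$3$ edges.} Colour-$3$ edges form a perfect matching, and each such edge $ab$ joins a vertex $a\in A$ to a vertex $b\in B$. So it suffices to show that $\rho(a)+\rho(b)=0$ for each of them, or at least that these contributions cancel in total.

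If $a,b$ lie on the same Kempe cycle $C$, the edge $ab$ is a chord lying entirely inside or entirely outside $C$. Hence $\lambda(a)=\lambda(b)$, while $a,b$ are in different bipartition classes, and so $\rho(a)=-\rho(b)$.

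If $a\in C$ and $b\in C'$ with $C\neq C'$, the edge crosses neither cycle. Either one cycle is nested inside the other, and then $\lambda$ has opposite values at the two ends, or the cycles are non-nested, and then both ends have $\lambda=-1$. In either case the relation between $\rho(a)$ and $\rho(b)$ also involves the constants $\mathrm{const}_C$ and $\mathrm{const}_{C'}$. Pinning these constants down consistently is where I expect the main difficulty.

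\textbf{Main obstacle and fallback.} I would first try to show that, after orienting all cycles counterclockwise, the constant is the same for every cycle, namely $\rho=\lambda$ on $A$ and $\rho=-\lambda$ on $B$. This amounts to checking that the colour ($1$ or $2$) of the outgoing edge at a vertex of $A$ is always the same for counterclockwise traversal. By the explicit direction computation of the first step, this holds for $A$-vertices, because the sign pattern, and hence the local geometry, of all $A$-vertices is tied to that of $B$-vertices through $s_i(a)=-s_i(b)$ along each edge.

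If that fails, the fallback is induction on the nesting depth of the Kempe cycles. I would remove an innermost cycle together with its chords and cancel its contribution against those of its neighbours across colour-$3$ edges, using the counts just described.
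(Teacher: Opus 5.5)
Your reduction is correct: $\rho$ separates claws from tripods, and on a single $(1,2)$-Kempe cycle $C$ you rightly get $\rho(v_k)=(-1)^k\lambda(v_k)\,\mathrm{const}_C$. This is exactly what the paper's local inspection ($a^+=d^-=0$, tripods $=b^-\cup c^+$) encodes.

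The gap is the cancellation step, which is the whole content of the theorem, and both routes you propose fail. In Figure~\ref{fig:12ex} every vertex of the inner hexagon is a tripod. Whichever direction you call colour $3$, the two $(1,2)$-Kempe cycles are the two halves of the annulus, lying side by side. The two inner-hexagon edges of colour $3$ join these halves. So there is a colour-$3$ edge $ab$ with $\rho(a)=\rho(b)$, which rules out cancellation edge by edge.

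The same edge also refutes your uniform-constant claim. By your own observation about non-nested cycles, $\lambda(a)=\lambda(b)=-1$. A single rule ``$\rho=\lambda$ on $A$, $\rho=-\lambda$ on $B$'' would then force $\rho(a)=-\rho(b)$, so $\mathrm{const}_C$ genuinely depends on the cycle. Your justification via $s_i(a)=-s_i(b)$ only constrains the sign of the one colour carried by the edge $ab$ and says nothing about the other two. The nesting-depth fallback is not an argument, and here there is no nesting to induct on.

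The missing idea is that you never need to compare different cycles: sum $\rho$ over each Kempe cycle separately. Since $C$ alternates between the classes, $|C\cap A|=|C\cap B|$, and
\[
\sum_{v\in C}\rho(v)=\pm 2\bigl(\mathrm{in}_A(C)-\mathrm{in}_B(C)\bigr),
\]
where $\mathrm{in}_A(C)$ is the number of vertices of $C\cap A$ whose colour-$3$ edge enters the interior of $C$. These two numbers coincide, for two reasons:
\begin{itemize}
\item A chord inside $C$ contributes once to each.
\item Let $U$ be the set of vertices strictly inside $C$. Then $U$ is a union of whole Kempe cycles, so $|U\cap A|=|U\cap B|$, and every edge leaving $U$ ends on $C$. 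Write $e(U)$ for the number of edges inside $U$ and $q_A,q_B$ for the numbers of edges from $U\cap A$, $U\cap B$ to $C$. Counting degrees gives $3|U\cap A|=e(U)+q_A$ and $3|U\cap B|=e(U)+q_B$, hence $q_A=q_B$.
\end{itemize}
This is precisely the lemma in the paper's proof. With it, every Kempe cycle contributes $0$ irrespective of $\mathrm{const}_C$.
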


From the point of view of differential geometry (see for instance~\cite{frolov2026geodesic}) after a proper affine transform a claw becomes an unbalanced vertex and
a tripod becomes a balanced vertex. So Theorem~\ref{th:comb} says that for a very specific type of network the number of balanced vertices is equal to the number of unbalanced vertices. In Section~\ref{sect:ex} we give examples that show that this class of networks is quite diverse.

\begin{proof}
Denote by $X$ and $Y$ the bipartition classes of $H$.
After applying an affine transformation, we may assume that
the three edge directions make angles that are multiples of $\pi/3$.

Let $M$ be the set of all edges parallel to $l_1$, and put $H' = H\setminus M$.
Every vertex of $H'$ has degree two: it is incident to one edge parallel
to $l_2$ and one edge parallel to $l_3$. Hence $H'$ is a disjoint union
of cycles.

Fix a cycle $C$ of $H'$ and let $D_C$ be
the bounded component of $\mathbb R^2\setminus C$. Write $|C|=2k$.
Label the vertices of $C$ by $a,b,c,d$ according as the interior angle
of $D_C$ is
\[
    \frac{\pi}{3},\quad
    \frac{2\pi}{3},\quad
    \frac{4\pi}{3},\quad
    \frac{5\pi}{3},
\]
respectively. Since the edges parallel to $l_2$ and $l_3$ alternate along $C$, the
vertices labeled $a$ or $c$ form one bipartition class of $C$, while
the vertices labeled $b$ or $d$ form the other one. Consequently,
\begin{equation} \label{eq:comb1}
    \#a+\#c = |V(C) \cap X| = |V(C) \cap Y| = \#b+\#d = k.    
\end{equation}

For $x\in\{a,b,c,d\}$, mark a vertex by $x^+$ if its incident edge from
$M$ lies in $D_C$, and by $x^-$ otherwise. 

\begin{lemma}
    For every cycle $C$ in $H'$ we have
    \begin{equation} \label{eq:comb2}
    \#a^+ + \#c^+ = \#b^+ + \#d^+.
    \end{equation}
\end{lemma}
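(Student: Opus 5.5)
The idea is to derive \eqref{eq:comb2} from a parity count of the bipartition classes inside $D_C$, using only planarity and the fact that $M$ is a perfect matching; the angle labels are needed only to translate the classes back into $a,b,c,d$. Recall from the paragraph before \eqref{eq:comb1} that the vertices of $C$ labelled $a$ or $c$ form one bipartition class of $C$ and those labelled $b$ or $d$ form the other. Without loss of generality, $V(C)\cap X$ consists of the $a$- and $c$-vertices and $V(C)\cap Y$ of the $b$- and $d$-vertices. It therefore suffices to show
\[
\#\{v\in V(C)\cap X:\ v \text{ is marked } +\} \;=\; \#\{v\in V(C)\cap Y:\ v \text{ is marked } +\}.
\]

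First I would locate the relevant edges and vertices. Since $H$ is a plane graph, no edge crosses $C$. Let $e\in M$ be incident to a vertex $v$ of $C$ marked $+$. Then $e$ starts into $D_C$, so its interior lies entirely in $D_C$, and its other endpoint $w$ lies in $\overline{D_C}$: either $w$ is on $C$ (and then also marked $+$) or $w\in D_C$. Next, let $w$ be a vertex in the open region $D_C$. Its cycle $C_w$ of $H'$ cannot meet $C$, since distinct cycles of $H'$ are vertex-disjoint and $H$ is plane. Hence $C_w$ lies entirely in $D_C$. Likewise the $M$-edge of $w$ cannot cross $C$, so its other endpoint lies in $\overline{D_C}$; if that endpoint is on $C$, it is marked $+$.

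Let $W$ be the set of vertices lying in $D_C$ together with the vertices of $C$ marked $+$. The previous paragraph shows that $M$ restricts to a perfect matching of $W$: every vertex of $W$ has its $M$-partner in $W$, and $M$ is a perfect matching of $H$ because every vertex has exactly one edge parallel to $l_1$. Each edge of $M$ joins $X$ to $Y$, so $|W\cap X|=|W\cap Y|$.

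On the other hand, the vertices in $D_C$ are a disjoint union of whole cycles of $H'$. Each such cycle alternates between $X$ and $Y$, so it contributes equally to both classes. Subtracting these balanced contributions leaves
\[
\#a^+ + \#c^+ = |\{v \in V(C)\cap X : v \text{ marked } +\}| = |\{v \in V(C)\cap Y : v \text{ marked } +\}| = \#b^+ + \#d^+,
\]
which is \eqref{eq:comb2}.

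The step that needs the most care is the topological one: an $M$-edge starting into $D_C$ stays in $\overline{D_C}$, and every cycle of $H'$ meeting $D_C$ lies inside it. Both follow from the Jordan curve theorem for the polygon $C$ and the fact that the drawing of $H$ is a plane embedding. One should also note that an $M$-edge leaving a vertex of $C$ is not tangent to $C$, so "lies in $D_C$" is well defined near that vertex. This holds because $l_1$ is not parallel to $l_2$ or $l_3$.
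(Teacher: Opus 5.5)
Your proof is correct and follows essentially the paper's argument. The vertices strictly inside $C$ form whole $H'$-cycles and so are balanced between $X$ and $Y$, and bipartite edge counting transfers this balance to the $+$-marked vertices of $C$. The only difference is bookkeeping: you count with the perfect matching $M$ restricted to $W$, which also absorbs the $M$-chords of $C$ lying in $D_C$. The paper instead uses the degree count $3|U\cap X|=e_U+q_X$ and treats those chords separately.
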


\begin{proof}
Let $U$ be the set of all vertices lying
strictly inside $C$. Since $H'$ is a disjoint union of cycles, $U$ is
the union of the vertex sets of some cycles of $H'$. Thus
\begin{equation} \label{eq:comb3}
    |U\cap X|=|U\cap Y|.
\end{equation}

Let $e_U$ be the number of edges with both endpoints in $U$, and let
$q_X$ and $q_Y$ be the numbers of edges joining $U\cap X$ and
$U\cap Y$, respectively, to $C$. By planarity, every edge leaving $U$
ends on $C$ and since $H$ is bipartite every edge counted by $e_U$ connects $U\cap X$ with $U\cap Y$. Counting the degrees of the vertices of $U\cap X$ and $U\cap Y$, respectively, in the graph $H$ 
gives
\[
    3|U\cap X|=e_U+q_X,
    \qquad
    3|U\cap Y|=e_U+q_Y.
\]
Together with~\eqref{eq:comb3}, this implies $q_X=q_Y$.

Therefore, the edges of $M$ joining $C$ to vertices strictly inside
$C$ have equally many endpoints in the two bipartition classes of
$C$. An edge of $M$ whose two endpoints lie on $C$ and whose interior
is contained in $D_C$ also has one endpoint in each bipartition class.
The lemma is proven.
\end{proof}

A direct inspection of the four local configurations shows that
$a^+=0$, $d^-=0$, and that a vertex of $C$ is a tripod precisely when it is of type
$b^-$ or $c^+$. It follows from~\eqref{eq:comb2} that
\[
    \#c^+=\#b^++\#d^+=\#b^++\#d.
\]
Hence
\[
    N_{\rm tripod}(C) = \#b^-+\#c^+ =\#b^-+\#b^++\#d = \#b+\#d = k
\]
by~\eqref{eq:comb1}. Since $C$ has $2k$ vertices, it also contains exactly $k$
claws.

Every vertex of $H$ belongs to exactly one cycle of $H'$. Summing over
all cycles, we obtain
\[
    N_{\rm claw}(H)=N_{\rm tripod}(H).
\]
\end{proof}

\section{Proof of Theorem~\ref{th:oblakov}}

Oblakov~\cite{oblakov2009non} provided the proof which starts with a smart restructuring and continues by a rather foggy invariant argument. 
We replace the second part with an elementary combinatorial Theorem~\ref{th:comb} and some elementary observations.
For the sake of completeness we repeat the beginning of the proof.

\subsection{Oblakov's restructuring}

Suppose that there are two locally minimal trees (blue and red) which are codirected at the terminals. Then consider their symmetric difference $G_0$ and choose a maximal (with respect to inclusion) subset of segments $G$ in $G_0$ 
that are parallel to some lines $l_1$, $l_2$, $l_3$ with pairwise angles $\pi/3$. 
The local structure of $G$ is depicted in Fig.~\ref{fig:localstr} (everything has red-blue and rotation symmetries).
One can identify $G$ with a graph, whose vertices are the ends of maximal with respect to inclusion segments and their intersection point and segments being the edges. The edges inherit the red-blue coloring from segments.

Define $G_1$ and $G_2$ as blue and red subgraphs of $G$; by the setup $G_1$ and $G_2$ are forests. 
If $G$ has a part of the first or the second type from Fig.~\ref{fig:localstr}, then we replace every such local pattern as shown in Fig.~\ref{fig:replacement}. After the replacement we have a new graph $H$ with red and blue subgraphs $H_1$ and $H_2$.
Clearly, $H$ inherits the following key properties of $G$: it is a finite plane cubic straight-line graph; at every vertex there is one edge in each of three directions and both $H_1$, $H_2$ are forests.
Thus $H$ satisfies all the geometric assumptions of Theorem~\ref{th:comb} (bipartiteness is given later by Lemma~\ref{lm:bipartite}), in particular a neighborhood of every vertex is a claw or a tripod (see Fig.~\ref{fig:clawpod}). Note that if $H \neq G$ then $H$ has more vertices than $G$ and the number of trees in forests $H_1$ and $H_2$ is greater than in $G_1$ and $G_2$.

\begin{figure}[h]
    \centering
    \begin{tikzpicture}

  \begin{scope}
  
    \coordinate (a) at (0,0);
    \coordinate (b) at (0,-1);
    \coordinate (c) at (-0.866,-0.5);
    \coordinate (d) at (0.866,-0.5);
    \coordinate (e) at (0,1);
    \coordinate (f) at (0.866,0.5);
    \coordinate (g) at (-0.866,0.5);

    \draw [ultra thick, blue] (c) -- (a) -- (d);
    \draw [ultra thick, red] (b) -- (a) -- (f);
    \draw [ultra thick, red] (a) -- (g);
    \draw [ultra thick, blue] (a) -- (e);

    \filldraw (a) circle (2pt);

\end{scope}

  \begin{scope}[shift={(5,0)}]

    \coordinate (a) at (0,0);
    \coordinate (c) at (-0.866,-0.5);
    \coordinate (d) at (0.866,-0.5);
    \coordinate (f) at (0.866,0.5);
    \coordinate (g) at (-0.866,0.5);

    \draw [ultra thick, blue] (c) -- (a) -- (f);
    \draw [ultra thick, red] (d) -- (a) -- (g);

    \filldraw (a) circle (2pt);
    
  \end{scope}

  \begin{scope}[shift={(10,0.5)}]
  
    \coordinate (a) at (0,0);
    \coordinate (b) at (0,-1);
    \coordinate (c) at (-0.866,-0.5);
    \coordinate (d) at (0.866,-0.5);

    \draw [ultra thick, blue] (c) -- (a) -- (d);
    \draw [ultra thick, red] (b) -- (a);

    \filldraw (a) circle (2pt);

\end{scope}

  \begin{scope}[shift={(15,0)}]

\coordinate (a) at (0,0);
    \coordinate (b) at (0,1);
    \coordinate (c) at (-0.866,-0.5);
    \coordinate (d) at (0.866,-0.5);

    \draw [ultra thick, blue] (c) -- (a) -- (d);
    \draw [ultra thick, blue] (b) -- (a);

    \filldraw (a) circle (2pt);
    
  \end{scope}
    
\end{tikzpicture}
    \caption{Local structure of $G$}
    \label{fig:localstr}
\end{figure}

\begin{figure}
    \centering
    \begin{tikzpicture}[scale=2]

  \begin{scope}
    \coordinate (a) at (0,0);
    \coordinate (b) at (0,-1);
    \coordinate (c) at (-0.866,-0.5);
    \coordinate (d) at (0.866,-0.5);
    \coordinate (e) at (0,1);
    \coordinate (f) at (0.866,0.5);
    \coordinate (g) at (-0.866,0.5);

    \draw [ultra thick, blue] (c) -- (a) -- (d);
    \draw [ultra thick, red] (b) -- (a) -- (f);
    \draw [ultra thick, red] (a) -- (g);
    \draw [ultra thick, blue] (a) -- (e);

    \filldraw (a) circle (1.1pt);

   \end{scope}

\draw [->,>=stealth] (0.7,.0) -- (1.2,.0);

   \begin{scope}[shift={(2,0)}]
    \coordinate (a) at (0,0);
    \coordinate (b) at (0,-1);
    \coordinate (c) at (-0.866,-0.5);
    \coordinate (d) at (0.866,-0.5);
    \coordinate (e) at (0,1);
    \coordinate (f) at (0.866,0.5);
    \coordinate (g) at (-0.866,0.5);

    \draw [ultra thick, blue] (e) -- (a) --++ (-150:0.2) coordinate (a1);
    \draw [ultra thick, blue] (a) --++ (-30:0.2) coordinate (a2);
    \draw [ultra thick, red] (a1) --++ (90:0.2) coordinate (g1) -- (g);
    
    \draw [ultra thick, blue] (g1) --++ (-150:0.6) coordinate (c1) --++ (-90:0.2) coordinate (c2) -- (c);
    \draw [ultra thick, blue] (c2) --++ (-30:0.6) coordinate (b1) --++ (30:0.2) coordinate (b2) --++ (-30:0.2) coordinate (b3);
    \draw [ultra thick, red] (b2) -- (b);
    \draw [ultra thick, blue] (b3) --++ (30:0.6) coordinate (d1) -- (d); 
    \draw [ultra thick, blue] (d1) --++ (90:0.2) coordinate (d2) --++ (150:0.6) coordinate (f1); 
    \draw [ultra thick, red] (f) -- (f1) -- (a2);
    \draw [ultra thick, red] (a1) --++ (-30:0.2) coordinate (o1) --++ (-90:0.2) coordinate (o2) --++ (-150:0.2) coordinate (o3) -- (c1);
    \draw [ultra thick, red] (o3) -- (b1);
    \draw [ultra thick, red] (o2) --++ (-30:0.2) coordinate (o4) -- (b3);
    \draw [ultra thick, red] (o1) -- (a2);
    \draw [ultra thick, red] (o4) -- (d2);
 
    \filldraw (a) circle (1pt);
    \filldraw (a1) circle (1pt);
    \filldraw (a2) circle (1pt);
    \filldraw (b1) circle (1pt);
    \filldraw (b2) circle (1pt);
    \filldraw (b3) circle (1pt);
    \filldraw (c1) circle (1pt);
    \filldraw (c2) circle (1pt);
    \filldraw (d1) circle (1pt);
    \filldraw (d2) circle (1pt);
    \filldraw (f1) circle (1pt);
    \filldraw (g1) circle (1pt);
    \filldraw (o1) circle (1pt);
    \filldraw (o2) circle (1pt);
    \filldraw (o3) circle (1pt);
    \filldraw (o4) circle (1pt);

  \end{scope}

    \begin{scope}[shift={(4.5,0)}]
    \coordinate (b) at (0,-1);
    \coordinate (c) at (-0.866,-0.5);
    \coordinate (e) at (0,1);
    \coordinate (f) at (0.866,0.5);

    \draw [ultra thick, blue] (b) -- (e);
    \draw [ultra thick, red] (c) -- (f);
 

   \end{scope}

\draw [->,>=stealth] (5.1,.0) -- (5.6,.0);

   \begin{scope}[shift={(6.3,0)}]
    
    \coordinate (a) at (0,0);
    \coordinate (b) at (0,-1);
    \coordinate (c) at (-0.866,-0.5);
    \coordinate (d) at (0.866,-0.5);
    \coordinate (e) at (0,1);
    \coordinate (f) at (0.866,0.5);
    \coordinate (g) at (-0.866,0.5);

    \draw [ultra thick, blue] (e) -- (a);
    \draw [ultra thick, red] (a) -- (f);  
    
    \draw [ultra thick, red] (a) --++ (150:0.2) coordinate (a1) --++ (-150:0.4) coordinate (c1) --++ (-90:0.2) coordinate (c2) -- (c);
    \draw [ultra thick, red] (c2) --++ (-30:0.2) coordinate (c3) --++ (30:0.4) coordinate (b1);
    \draw [ultra thick, blue] (b1) -- (b);
    \draw [ultra thick, blue] (a1) --++ (-90:0.2) coordinate (o1) --++ (-150:0.2) coordinate (o2) -- (c1);
    \draw [ultra thick, blue] (o1) -- (b1);
    \draw [ultra thick, blue] (o2) -- (c3);
     
    \filldraw (a) circle (1pt);
    \filldraw (a1) circle (1pt);
    \filldraw (b1) circle (1pt);
    \filldraw (c1) circle (1pt);
    \filldraw (c2) circle (1pt);
    \filldraw (c3) circle (1pt);
    \filldraw (o1) circle (1pt);
    \filldraw (o2) circle (1pt);

  \end{scope}
    
\end{tikzpicture}
    \caption{Replacements in $G$}
    \label{fig:replacement}
\end{figure}

\subsection{Completion of the proof}

First recall that after the Oblakov restructuring we obtained a plane cubic graph $H$.

\begin{lemma} \label{lm:bipartite}
 The graph $H$ is bipartite.
\end{lemma}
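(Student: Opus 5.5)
The plan is to show directly that every cycle of $H$ has even length. For this I would combine the three-direction geometry with the red/blue coloring inherited from the two locally minimal trees. After the affine normalization, the three lines make pairwise angles $\pi/3$. Every vertex of $H$ has exactly one edge in each direction, so the angle between two consecutive edges at a vertex is $\pi/3$, $2\pi/3$, $4\pi/3$ or $5\pi/3$; the value $\pi$ never occurs.

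\textbf{Step 1: a local coloring rule.} From Fig.~\ref{fig:localstr} (types three and four) and the replacement patterns of Fig.~\ref{fig:replacement}, I would extract the following rule for every vertex $v$ of $H$.
\begin{itemize}
\item If $v$ is a tripod, its three edges have the same color.
\item If $v$ is a claw, the middle edge has one color and the two outer edges have the other color.
\end{itemize}
For $G$ this is read off Fig.~\ref{fig:localstr}, using the red-blue and rotational symmetry. The configurations of the first two types are exactly the ones removed by the restructuring. For the new vertices created in Fig.~\ref{fig:replacement}, I would check the rule by direct inspection of the pictures. This inspection is where I expect most of the (tedious) work to lie. The conceptual reason behind the rule is that each color class comes from a locally minimal tree. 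There the edges at a point meet at angles at least $2\pi/3$, so two edges of one color cannot meet at angle $\pi/3$.

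\textbf{Step 2: color changes.} Reformulate the rule in terms of pairs of edges at a vertex. Two edges at a vertex have different colors exactly when the angle between them is $\pi/3$, equivalently $5\pi/3$ measured on the other side. Indeed, at a claw the outer–middle pairs make angle $\pi/3$ and the outer–outer pair makes angle $2\pi/3$, while at a tripod all pairwise angles are $2\pi/3$.

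\textbf{Step 3: parity via turning.} Let $C$ be any cycle of $H$, viewed as a simple closed polygon. Let $a,b,c,d$ be the numbers of its vertices whose interior angle is $\pi/3$, $2\pi/3$, $4\pi/3$, $5\pi/3$, respectively.
\begin{itemize}
\item The exterior turning angles are $2,1,-1,-2$ in units of $\pi/3$, and they sum to $2\pi$. This gives $2a+b-c-2d=6$, hence $b+c$ is even.
\item By Step 2, the number of color changes along $C$ equals $a+d$.
\item Going once around the closed cycle, the number of color changes must be even.
\end{itemize}
Therefore $|C|=a+b+c+d\equiv a+d\equiv 0 \pmod 2$. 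Since $H$ has no odd cycles, it is bipartite.

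The main obstacle is Step 1: verifying carefully that the restructured graph $H$, and not just $G$, satisfies the coloring rule at every vertex. In particular, the new vertices in Fig.~\ref{fig:replacement} must be checked one by one. If some new vertex violated the rule, I would fall back on the forest property of $H_1$ and $H_2$ to argue that no two edges of one color meet at angle $\pi/3$.
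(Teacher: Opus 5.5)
Your proof is correct and is essentially the paper's argument: the turning-angle identity forces the number of $2\pi/3$ and $4\pi/3$ corners of $C$ to be even. Meanwhile, the number of $\pi/3$ and $5\pi/3$ corners equals the number of color changes along $C$, so it is even too. The paper also relies on your Step~1 coloring rule, only implicitly (it underlies Lemma~\ref{lm:notequal} as well), and a direct check of the vertices in Fig.~\ref{fig:replacement} confirms it; so your fallback via the forest property is not needed, and it would not work anyway, since acyclicity does not force tripods to be monochromatic.
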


\begin{proof}
Consider any simple cycle $C$. Let $x,y,z,w$ be the number of angles of size $2\pi/3$, $\pi/3$, $4\pi/3$ and $5\pi/3$ in $C$, respectively.
Then the sum of signed exterior turning angles on $C$ is 
\[
\frac{\pi}{3}x + \frac{2\pi}{3}y - \frac{\pi}{3}z - \frac{2\pi}{3}w =  \pm 2\pi.
\]
Thus $x + z$ is even. Note that $y + w$ is exactly the number of color exchanges along the boundary of $C$ which is also an even number. Thus $x + y + z + w$ is even as desired.

Since every cycle has even length, the graph $H$ is bipartite.
\end{proof}

Recall that $H_1$ and $H_2$ are the red and blue subgraphs of $H$.

\begin{lemma} \label{lm:notequal}
Let $t$ be the total number of trees with at least two vertices in $H_1$ and $H_2$. Then the number of claws is equal to the number of tripods plus $2t$.
\end{lemma}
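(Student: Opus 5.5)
The plan is a vertex count in each tree of the two forests. Every vertex of $H$ is either a claw or a tripod, and each role should force a definite degree in the blue and red forests. Summing the handshake identity for trees over all components of $H_1$ and $H_2$ then gives the formula.

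\textbf{Step 1: the local picture after the restructuring.} After the replacements of Fig.~\ref{fig:replacement}, only the two patterns on the right of Fig.~\ref{fig:localstr} survive, up to rotation and red-blue symmetry. The replacement gadgets themselves consist only of such vertices, which I would check by inspecting the picture.

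\begin{itemize}
\item \emph{Monochromatic vertex.} All three edges have the same colour. The angles between same-coloured edges are then at least $2\pi/3$, because these are edges of a locally minimal tree. So this is a tripod, a Steiner point of that colour. It has degree $3$ in one forest and degree $0$ in the other.
\item \emph{Mixed vertex.} Two edges have one colour and one edge has the other. This is the third pattern of Fig.~\ref{fig:localstr}, and the three edges lie in an open half-plane, so it is a claw. It has degree $2$ in one forest and degree $1$ in the other.
\end{itemize}

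The point to justify is that a monochromatic claw cannot occur, and that a mixed vertex is never a tripod. For the first: two same-coloured edges at angle $\pi/3$ are forbidden in a locally minimal tree. For the second: a mixed vertex arises when one colour ends at a Steiner point of the other colour, or when the two trees share an edge that was removed in the symmetric difference. In either case the geometry is the claw of Fig.~\ref{fig:localstr}. Terminals do not occur, because the trees coincide near $P$.

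This case analysis is the main obstacle. It relies on the precise description of $G$ in Fig.~\ref{fig:localstr}, and I would verify it carefully, including at the new vertices introduced by the replacement.

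\textbf{Step 2: degrees inside a tree.} Let $T$ be a component of $H_1$ or $H_2$ with at least two vertices. By Step 1, every vertex of $T$ has degree $1$, $2$ or $3$ in $T$. Since $T$ is a tree,
\[
\#\{\deg_T=1\} = \#\{\deg_T=3\} + 2 .
\]
One-vertex components are exactly the monochromatic tripods seen from the opposite colour, where the degree is $0$. These are the components excluded from $t$.

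\textbf{Step 3: summation.} Sum the identity of Step 2 over all $t$ nontrivial trees. By Step 1:
\begin{itemize}
\item each claw is a leaf, i.e.\ has degree $1$, in exactly one of $H_1, H_2$;
\item each tripod has degree $3$ in exactly one of them;
\item no vertex of degree $1$ or $3$ in a forest is missed.
\end{itemize}
Hence the left-hand sides add up to $N_{\rm claw}(H)$ and the right-hand sides to $N_{\rm tripod}(H)+2t$, which is the statement of Lemma~\ref{lm:notequal}.
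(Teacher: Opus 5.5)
Your proposal is correct and is essentially the paper's argument: sum the identity $d_1(T)=d_3(T)+2$ over the nontrivial trees of $H_1$ and $H_2$. This uses that tripods are exactly the monochromatic vertices (degree $3$ in one forest). It also uses that claws are exactly the mixed vertices (a leaf in the colour of the middle edge, degree $2$ in the other), as read off from Fig.~\ref{fig:localstr} and the replacement gadgets. Two points of yours are, if anything, more careful than the paper's short proof: you explicitly exclude one-vertex components, and you flag that the new gadget vertices must be checked by inspection rather than by the $2\pi/3$ angle argument.
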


\begin{proof}
For every non-empty tree $T$ of $H_1$ or $H_2$ one has $d_1(T) = d_3(T) + 2$, where $d_i(T)$ stands for the number of vertices with degree $i$ in $T$.
Then sum these values over all trees in $H_1$ and $H_2$.
Note that every claw is counted exactly once in $d_1$ (because the middle edge in a claw has not the same color as the outer ones, see Fig.~\ref{fig:localstr}) and every tripod is counted in exactly one tree as a part of $d_3$.
\end{proof}

Now we are ready to collect the parts of the main proof together.

\begin{proof}[Proof of Theorem~\ref{th:oblakov}]
   First apply Oblakov reduction to obtain a plane cubic graph $H$. By Lemma~\ref{lm:bipartite} $H$ is bipartite. Then we forget the red-blue coloring of the edges of $H$ and apply Theorem~\ref{th:comb} which gives that the number of claws is equal to the number of tripods in $H$. Finally, Lemma~\ref{lm:notequal} gives that the number of trees in red and blue parts of $H$ is zero and thus $H$ is empty. 
\end{proof}

\section{Examples} \label{sect:ex}

Here we collect examples which show that the class of cubic plane $H$ satisfying the conditions of Theorem~\ref{th:comb} is quite wide.
Then we give examples of non-bipartite $H$ with 
\[
 N_{\rm claw}(H) - N_{\rm tripod}(H) = 4k
\]
for any $k \in \mathbb{Z}$. In Section~\ref{sec:degree} we show that four always divides this difference.

\subsection{Bipartite examples}

The smallest bipartite example has 12 vertices. Indeed, there is a classification of small connected cubic bipartite graphs \href{https://oeis.org/A006823}{https://oeis.org/A006823} and it is well known that the only cubic bipartite planar graph with at most 10 vertices is the cube graph $Q_3$. By Theorem~\ref{th:comb} any drawing of $Q_3$ has 4 claws and 4 tripods. Thus these 4 claws form the outer quadrilateral face; so the sum of the outer angles of the outer face is $4 \cdot \pi/3 \neq 2\pi$, a contradiction.
An example with 12 vertices is drawn in Figure~\ref{fig:12ex}.

\begin{figure}[h]
    \centering
    \begin{tikzpicture}[
    x={(1cm,0cm)},
    y={({0.5cm},{0.8660254cm})},
    line cap=round,
    line join=round,
    scale=1
]


\coordinate (A) at (-2,2);
\coordinate (B) at (0,2);
\coordinate (C) at (2,0);
\coordinate (D) at (2,-2);
\coordinate (E) at (0,-2);
\coordinate (F) at (-2,0);

\coordinate (A1) at (-4,4);
\coordinate (B1) at (0,4);
\coordinate (C1) at (4,0);
\coordinate (D1) at (4,-4);
\coordinate (E1) at (0,-4);
\coordinate (F1) at (-4,0);


\draw[ultra thick, blue] (A) -- (B) -- (C) -- (D) -- (E) -- (F) -- (A);
\draw[ultra thick, blue] (A1) -- (B1) -- (C1) -- (D1) -- (E1) -- (F1) -- (A1);

\foreach \P in {A,B,C,D,E,F} 
    \draw[ultra thick, blue] (\P) -- (\P1);

\foreach \P in {A,B,C,D,E,F}
    \filldraw[black] (\P) circle (2pt);
\foreach \P in {A,B,C,D,E,F}    
    \filldraw[black] (\P1) circle (2pt);

\end{tikzpicture}
    \caption{An example of plane cubic graphs $H$ with 6 claws and 6 tripods}
    \label{fig:12ex}
\end{figure}

\subsection{Non-bipartite examples}

Again the classifications gives that the only cubic planar graphs with at most 6 vertices are $K_4$ and the triangular prism.
It is straightforward to check that both these graphs cannot be drawn by straight edges in three parallel directions.
Thus the smallest number of vertices in an example is 8, see Figure~\ref{fig:exampleClaws}.

\begin{figure}[h]
    \centering
    \hfill
\begin{tikzpicture}[
    x={(1cm,0cm)},
    y={({-0.5cm},{0.8660254cm})},
    line cap=round,
    line join=round,
    scale=1
]


\coordinate (A) at (-2,0);
\coordinate (B) at (0,0);
\coordinate (C) at (0,2);
\coordinate (D) at (2,2);

\coordinate (E) at (-2,-3);
\coordinate (F) at (0,-3);
\coordinate (G) at (0,-1);
\coordinate (H) at (2,-1);



\draw[ultra thick, blue] (C) -- (A) -- (B) -- (C) -- (D) -- (B);
\draw[ultra thick, blue] (A) -- (E);
\draw[ultra thick, blue] (D) -- (H);
\draw[ultra thick, blue] (G) -- (E) -- (F) -- (G) -- (H) -- (F);

\foreach \P in {A,B,C,D,E,F,G,H}
    \filldraw[black] (\P) circle (2pt);

\end{tikzpicture}~\hfill~\begin{tikzpicture}[
    x={(1cm,0cm)},
    y={({-0.5cm},{0.8660254cm})},
    line cap=round,
    line join=round,
    scale=1
]


\coordinate (A) at (-2,0);
\coordinate (B) at (0,0);
\coordinate (C) at (0,2);
\coordinate (D) at (2,2);
\coordinate (E) at (2,-1);
\coordinate (F) at (1,-1);
\coordinate (G) at (1,-2);
\coordinate (H) at (0,-2);
\coordinate (J) at (0,-1);
\coordinate (K) at (-1,-1);
\coordinate (L) at (-1,-2);
\coordinate (M) at (-2,-2);



\draw[ultra thick, blue] (A) -- (B) -- (C) -- (D) -- (E) -- (F) -- (G) -- (H) -- (J) -- (K) -- (L) -- (M) -- (A);
\draw[ultra thick, blue] (A) -- (C);
\draw[ultra thick, blue] (B) -- (D);
\draw[ultra thick, blue] (E) -- (G);
\draw[ultra thick, blue] (F) -- (H);
\draw[ultra thick, blue] (J) -- (L);
\draw[ultra thick, blue] (K) -- (M);

\foreach \P in {A,B,C,D,E,F,G,H,J,K,L,M}
    \filldraw[black] (\P) circle (2pt);

\end{tikzpicture}
\hfill
    \caption{Examples of non-bipartite plane cubic graphs $H$ with 8 and 12 claws and no tripods}
    \label{fig:exampleClaws}
\end{figure}

Also, the examples in Figure~\ref{fig:exampleClaws} consist only of claws.
Clearly, a vertex which is a tripod belongs to the convex hull of other vertices. Thus there is no picture consisting only of tripods.
To create a picture with many tripods consider the following pattern with 10 tripods and 2 claws, see Figure~\ref{fig:TripodsExample}.
One may glue $m$ copies of this pattern by dashed edges and then close it by up and down part from the rightmost part of Figure~\ref{fig:TripodsExample}.
Then the resulting graph has $10m + 6$ tripods and $2m + 6$ claws.

\begin{figure}[h]
    \centering
    \begin{tikzpicture}[
    x={(1cm,0cm)},
    y={({0.5cm},{0.8660254cm})},
    line cap=round,
    line join=round,
    scale=0.5
]


\coordinate (A) at (0,0);
\coordinate (B) at (2,0);
\coordinate (C) at (4,-2);
\coordinate (D) at (4,-4);
\coordinate (E) at (2,-4);
\coordinate (F) at (0,-2);

\coordinate (G) at (2,2);
\coordinate (GG) at (0,4);
\coordinate (H) at (4,2);
\coordinate (HH) at (4,4);
\coordinate (J) at (8,-2);
\coordinate (JJ) at (8,-4);

\coordinate (M) at (8,-6);
\coordinate (N) at (6,-6);

\coordinate (G1) at (2,-6);
\coordinate (H1) at (0,-6);
\coordinate (J1) at (-4,-2);
\coordinate (JJ1) at (-2,-4);

\coordinate (M1) at (-4,2);
\coordinate (N1) at (-2,2);

\coordinate (MM1) at (-6,4);
\coordinate (NN1) at (-2,4);


\draw[ultra thick, blue] (A) -- (B) -- (C) -- (D) -- (E) -- (F) -- (A);
\draw[ultra thick, blue] (B) -- (G) -- (H) -- (J) -- (C);
\draw[ultra thick, blue] (J) -- (M);
\draw[ultra thick, blue] (N) -- (D);

\draw[ultra thick, blue] (E) -- (G1);
\draw[ultra thick, blue] (H1) -- (J1) -- (F);
\draw[ultra thick, blue] (J1) -- (M1) -- (N1) -- (A);

\draw[ultra thick, blue] (G) -- (GG);
\draw[ultra thick, blue] (H) -- (HH);
\draw[ultra thick, blue] (M) -- (JJ);
\draw[ultra thick, blue] (N) -- (D);
\draw[ultra thick, blue] (G1) -- (E);
\draw[ultra thick, blue] (H1) -- (JJ1);
\draw[ultra thick, blue] (M1) -- (MM1);
\draw[ultra thick, blue] (N1) -- (NN1);
\draw[ultra thick, dashed, red] (G) -- (GG);
\draw[ultra thick, dashed, red] (H) -- (HH);
\draw[ultra thick, dashed, red] (M) -- (JJ);
\draw[ultra thick, dashed, red] (N) -- (D);
\draw[ultra thick, dashed, red] (G1) -- (E);
\draw[ultra thick, dashed, red] (H1) -- (JJ1);
\draw[ultra thick, dashed, red] (M1) -- (MM1);
\draw[ultra thick, dashed, red] (N1) -- (NN1);

\foreach \P in {A,B,C,D,E,F,G,H,J,J1,M1,N1}
    \filldraw[black] (\P) circle (4pt);

\end{tikzpicture}
\hfill
\begin{tikzpicture}[
    x={(1cm,0cm)},
    y={({0.5cm},{0.8660254cm})},
    line cap=round,
    line join=round,
    scale=0.5
]


\coordinate (A) at (0,0);
\coordinate (B) at (2,0);
\coordinate (C) at (4,-2);
\coordinate (D) at (4,-4);
\coordinate (E) at (2,-4);
\coordinate (F) at (0,-2);

\coordinate (G) at (2,2);
\coordinate (H) at (4,2);
\coordinate (J) at (8,-2);
\coordinate (JJ) at (8,-4);

\coordinate (M) at (8,-6);
\coordinate (N) at (6,-6);

\coordinate (G1) at (2,-6);
\coordinate (H1) at (0,-6);

\coordinate (G2) at (3,-8);
\coordinate (H2) at (1,-8);

\coordinate (J1) at (-4,-2);
\coordinate (JJ1) at (-2,-4);

\coordinate (M1) at (-4,2);
\coordinate (N1) at (-2,2);

\coordinate (X) at (2,4);
\coordinate (Y) at (-4,4);

\coordinate (K) at (8,-8);
\coordinate (L) at (2,-8);

\coordinate (K2) at (9,-10);
\coordinate (L2) at (3,-10);

\coordinate (M2) at (9,-8);
\coordinate (N2) at (7,-8);


\draw[ultra thick, blue] (A) -- (B) -- (C) -- (D) -- (E) -- (F) -- (A);
\draw[ultra thick, blue] (B) -- (G);
\draw[ultra thick, blue] (H) -- (J) -- (C);
\draw[ultra thick, blue] (J) -- (M);
\draw[ultra thick, blue] (N) -- (D);

\draw[ultra thick, blue] (E) -- (G1);
\draw[ultra thick, blue] (H1) -- (J1) -- (F);
\draw[ultra thick, blue] (J1) -- (M1);
\draw[ultra thick, blue] (N1) -- (A);

\draw[ultra thick, blue] (M) -- (JJ);
\draw[ultra thick, blue] (N) -- (D);
\draw[ultra thick, blue] (H1) -- (JJ1);

\draw[ultra thick, blue] (G2) -- (L2);
\draw[ultra thick, blue] (H2) -- (L2);
\draw[ultra thick, blue] (M2) -- (K2);
\draw[ultra thick, blue] (N2) -- (K2);
\draw[ultra thick, blue] (K2) -- (L2);

\draw[ultra thick, dashed, red] (G2) -- (L2);
\draw[ultra thick, dashed, red] (H2) -- (L2);
\draw[ultra thick, dashed, red] (M2) -- (K2);
\draw[ultra thick, dashed, red] (N2) -- (K2);

\draw[ultra thick, blue] (X) -- (Y);
\draw[ultra thick, blue] (G) -- (X);
\draw[ultra thick, blue] (H) -- (X);
\draw[ultra thick, blue] (M1) -- (Y);
\draw[ultra thick, blue] (N1) -- (Y);

\draw[ultra thick, dashed, red] (M) -- (JJ);
\draw[ultra thick, dashed, red] (N) -- (D);
\draw[ultra thick, dashed, red] (G1) -- (E);
\draw[ultra thick, dashed, red] (H1) -- (JJ1);

\foreach \P in {A,B,C,D,E,F,X,Y,K2,L2,J,J1}
    \filldraw[black] (\P) circle (4pt);

\end{tikzpicture}
    \caption{A construction of a graph $H$ with many tripods}
    \label{fig:TripodsExample}
\end{figure}

A (non-connected) union of the listed examples can reach any difference between the numbers of claws and tripods of the form $4k$, $k \in \mathbb{Z}$.

Finally, let us note that the numbers of claws and tripods can be odd, see Figure~\ref{fig:oddExample}.

\begin{figure}[h]
    \centering
    \begin{tikzpicture}[
    x={(1cm,0cm)},
    y={({-0.5cm},{0.8660254cm})},
    line cap=round,
    line join=round,
    scale=2
]

\coordinate (v0) at (0,0);
\coordinate (v1) at (1,0);
\coordinate (v2) at (0,1);
\coordinate (v3) at (1,1);
\coordinate (v4) at (3,2);
\coordinate (v5) at (1,2);
\coordinate (v6) at (3,4);
\coordinate (v7) at (2,4);
\coordinate (v8) at (1,3);
\coordinate (v9) at (2,3);

\draw[ultra thick, blue]
(v0)--(v1)
(v2)--(v3)
(v4)--(v5)
(v6)--(v7)
(v8)--(v9);

\draw[ultra thick, blue]
(v0)--(v2)
(v1)--(v3)
(v4)--(v6)
(v5)--(v8)
(v7)--(v9);

\draw[ultra thick, blue]
(v0)--(v3)
(v1)--(v4)
(v2)--(v5)
(v6)--(v9)
(v7)--(v8);

\foreach \i in {0,...,9}
\filldraw[black] (v\i) circle (1pt);

\end{tikzpicture}
    \caption{An example of non-bipartite plane cubic graphs $H$ with 9 claws and 1 tripod}
    \label{fig:oddExample}
\end{figure}

\section{Possibly non-bipartite \texorpdfstring{$H$}{H}}
\label{sec:degree}

\subsection{Degree of the face coloring}

We first recall the topological degree and its standard simplicial and cellular computation. 
The use of this construction for proper $4$-colorings
of triangulations goes back to Fisk; see
\cite{fisk1973structure,fisk1977geometric}. We follow the explicit
exposition in~\cite[Section~3.1]{mohar2009new}.

\begin{definition}[Topological degree]
\label{def:topological-degree}
Let $K$ be a triangulation of an oriented $2$-sphere, and let
\[
    \Phi\colon |K|\longrightarrow\partial\Delta^3
\]
be a continuous map. Orient $\partial\Delta^3$. The degree of
$\Phi$ is the integer $\deg\Phi$ determined by
\[
    \Phi_*[K]
    =
    \deg(\Phi)\,[\partial\Delta^3]
\]
in $H_2(\partial\Delta^3;\mathbb Z)\simeq\mathbb Z$,
where $[K]$ and $[\partial\Delta^3]$ are the fundamental classes
determined by the chosen orientations.
\end{definition}

\begin{lemma}[Signed cell formula]
\label{lm:signed-cell-formula}
Let $K$ be a finite triangular cell decomposition of an oriented
$2$-sphere, and let
\[
    \Phi\colon |K|\longrightarrow\partial\Delta^3
\]
be a cellular map whose restriction to every triangular $2$-cell of
$K$ is a homeomorphism onto a face of $\partial\Delta^3$.

For every triangular $2$-cell $\sigma\in K^{(2)}$, let
\[
    \varepsilon_\Phi(\sigma)
    =
    \begin{cases}
        +1, & \Phi|_\sigma \text{ preserves orientation},\\
        -1, & \Phi|_\sigma \text{ reverses orientation}.
    \end{cases}
\]
Then, for every triangular face
$\tau\subset\partial\Delta^3$,
\begin{equation} \label{eq:top1}
    \deg\Phi
    =
    \sum_{\substack{\sigma\in K^{(2)}
                     \Phi(\sigma)=\tau}}
        \varepsilon_\Phi(\sigma).
\end{equation}
In particular, the signed count on the right-hand side is independent
of the choice of $\tau$.
\end{lemma}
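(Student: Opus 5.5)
The plan is to compute $\Phi_*[K]$ at the level of simplicial (cellular) chains. Because $\Phi$ is cellular and maps every triangular $2$-cell homeomorphically onto a face of $\partial\Delta^3$, the induced chain map on $2$-chains is determined cell by cell. First fix orientations. The chosen orientation of the sphere $|K|$ orients each $2$-cell $\sigma$ so that the fundamental class is represented by the cycle $[K]=\sum_{\sigma\in K^{(2)}}\sigma$. Likewise $[\partial\Delta^3]=\sum_{\tau}\tau$, with the four faces coherently oriented. By definition of $\varepsilon_\Phi(\sigma)$, the cellular chain map sends the oriented cell $\sigma$ to $\varepsilon_\Phi(\sigma)\,\Phi(\sigma)$. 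This is where the homeomorphism hypothesis enters: the local degree of $\Phi|_\sigma$ onto its image face is $\pm1$ according to whether orientation is preserved.

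Hence
\[
\Phi_\#\Big(\sum_{\sigma}\sigma\Big)=\sum_{\tau}\Big(\sum_{\sigma:\,\Phi(\sigma)=\tau}\varepsilon_\Phi(\sigma)\Big)\tau .
\]
Since $\partial\Delta^3$ has no $3$-cells, $H_2(\partial\Delta^3)$ is exactly the group of $2$-cycles, and it is generated by $\sum_\tau\tau$. The image of a cycle is a cycle, so the left-hand side is $\deg(\Phi)\sum_\tau\tau$. Comparing the coefficients of any face $\tau$ gives \eqref{eq:top1}. Independence of the count from the choice of $\tau$ follows automatically, because every coefficient of a $2$-cycle on $\partial\Delta^3$ is the same.

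The main obstacle is the claim that cellular homology agrees with singular homology, so that the cellular chain map computes $\Phi_*$. A related point is that the cellular boundary of a $2$-cell onto a face has coefficient $\pm1$ given by orientation. This is standard; I would cite the cellular approximation and the naturality of cellular homology for cellular maps. Alternatively, the same result follows from computing the local degree at a generic interior point $y\in\tau$: the preimage of $y$ is one point in each $\sigma$ with $\Phi(\sigma)=\tau$, each contributing local degree $\varepsilon_\Phi(\sigma)$, and the degree is the sum of these local degrees. A further subtlety is that $K$ need not be simplicial, only a triangular cell decomposition. This does not matter, since the argument uses only cellular chains.
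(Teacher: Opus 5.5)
Your proposal is correct and follows the paper's proof essentially step for step. You push the fundamental cycle $\sum_\sigma \sigma$ forward cell by cell to get coefficients $\sum_{\Phi(\sigma)=\tau}\varepsilon_\Phi(\sigma)$, then use that every integral $2$-cycle on $\partial\Delta^3$ is a multiple of $\sum_\tau\tau$ and compare coefficients; your remarks on why the cellular chain map computes $\Phi_*$, and the alternative local-degree count at a generic point of $\tau$, add justification the paper leaves implicit.
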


\begin{proof}
Give every triangle of $K$ the orientation induced by the
orientation of $|K|$, and every face of $\partial\Delta^3$ the
boundary orientation. Let
\[
    z_K=\sum_{\sigma\in K^{(2)}}\sigma
\]
be the fundamental cycle of $K$. Then
\begin{equation} \label{eq:top2}
    \Phi_\#(z_K)
    =
    \sum_{\tau\in(\partial\Delta^3)^{(2)}}
    \left(
        \sum_{\substack{\sigma\in K^{(2)}
                         \Phi(\sigma)=\tau}}
            \varepsilon_\Phi(\sigma)
    \right)\tau.
\end{equation}

The chain $\Phi_\#(z_K)$ is a $2$-cycle on
$\partial\Delta^3$. Every integral $2$-cycle on the boundary of a
tetrahedron is an integer multiple of its fundamental cycle.
Consequently, all four coefficients in~\eqref{eq:top2} are equal to the same
integer $d$, and
\[
    \Phi_\#(z_K)
    =
    d\,z_{\partial\Delta^3}.
\]
By Definition~\ref{def:topological-degree}, this integer is
$d=\deg\Phi$. Comparing the coefficient of $\tau$ gives~\eqref{eq:top1}.
\end{proof}

\begin{corollary}
\label{cor:total-orientation-sign}
Under the assumptions of Lemma~\ref{lm:signed-cell-formula},
\begin{equation} \label{eq:top3}
    \sum_{\sigma\in K^{(2)}}
        \varepsilon_\Phi(\sigma)
    =
    4\deg\Phi.
\end{equation}
\end{corollary}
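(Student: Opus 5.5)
The plan is to sum the identity of Lemma~\ref{lm:signed-cell-formula} over the four triangular faces of $\partial\Delta^3$. The only structural input needed is that every $2$-cell of $K$ maps onto exactly one face.

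First I would record that, by the hypothesis of Lemma~\ref{lm:signed-cell-formula}, the restriction $\Phi|_\sigma$ is a homeomorphism onto a face of $\partial\Delta^3$ for each $\sigma\in K^{(2)}$. That face is uniquely determined, namely $\tau=\Phi(\sigma)$. Hence the sets
\[
\{\sigma\in K^{(2)} : \Phi(\sigma)=\tau\},\qquad \tau\in(\partial\Delta^3)^{(2)},
\]
form a partition of $K^{(2)}$. This gives
\[
\sum_{\sigma\in K^{(2)}}\varepsilon_\Phi(\sigma)=\sum_{\tau\in(\partial\Delta^3)^{(2)}}\ \sum_{\substack{\sigma\in K^{(2)}\\ \Phi(\sigma)=\tau}}\varepsilon_\Phi(\sigma).
\]

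Next I apply \eqref{eq:top1}, which says that each inner sum equals $\deg\Phi$, independently of $\tau$. Since $\partial\Delta^3$ has exactly four triangular faces, the right-hand side equals $4\deg\Phi$, which is \eqref{eq:top3}. Equivalently, one can read this off \eqref{eq:top2}: the sum of the coefficients of $\Phi_\#(z_K)$ is the sum of the coefficients of $d\,z_{\partial\Delta^3}$, which is $4d$.

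There is no real obstacle here. The only point to check is that the $\varepsilon_\Phi(\sigma)$ are defined with respect to a fixed orientation of each face $\tau$, so that \eqref{eq:top1} holds for each $\tau$ with the same sign conventions used in the total sum. That is exactly the convention set in the proof of Lemma~\ref{lm:signed-cell-formula}: the boundary orientation on $\partial\Delta^3$ and the induced orientation on the cells of $K$.
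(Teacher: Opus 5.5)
Your proof is correct and follows the paper's own argument. The sets of $2$-cells mapped onto each of the four faces of $\partial\Delta^3$ partition $K^{(2)}$, and by Lemma~\ref{lm:signed-cell-formula} each signed preimage count equals $\deg\Phi$, so summing over the four faces gives $4\deg\Phi$.
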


\begin{proof}
Each triangle of $K$ is mapped onto exactly one of the four faces
of $\partial\Delta^3$. By
Lemma~\ref{lm:signed-cell-formula}, the signed sum over the
preimages of each face is $\deg\Phi$. Summing over the four faces
gives~\eqref{eq:top3}.
\end{proof}

\subsection{Claw-tripod imbalance}

\begin{theorem}
Let $H$ be a connected cubic plane graph drawn with three pairwise
non-parallel edge directions $l_1,l_2,l_3$, with one edge in each
direction incident to every vertex. Then 
\[
N_{\rm claw}(H) - N_{\rm tripod}(H) = 0 \pmod 4.
\]
\end{theorem}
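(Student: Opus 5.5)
The plan is to reduce the statement to Corollary~\ref{cor:total-orientation-sign} through a Tait-type correspondence between the three edge directions and a proper $4$-coloring of the faces of $H$. The claws and tripods should then turn out to be exactly the triangles of the dual complex that map with the two opposite orientations.

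\emph{Step 1: face coloring.} Identify the three directions with the nonzero elements of $\mathbb Z_2^2$: $l_1\mapsto(1,0)$, $l_2\mapsto(0,1)$, $l_3\mapsto(1,1)$. Give the outer face the color $0$. Define the color of any other face by walking along a path in the plane, transverse to $H$ and avoiding vertices, and adding the label of each edge that is crossed.
\begin{itemize}
\item This is well defined. The plane is simply connected, so any two such paths differ by a combination of small loops around vertices. A small loop around a vertex crosses its three edges, and $(1,0)+(0,1)+(1,1)=0$.
\item The coloring is proper, because crossing an edge adds a nonzero element.
\item $H$ has no bridges. The two sides of a bridge lie in the same face, but crossing the bridge changes the color, which is impossible.
\item $H$ has no loops or multiple edges, since it is a straight-line drawing.
\item Around a vertex $v$, the three incident faces get the colors $f$, $f+a$ and $f+a+b$, where $a,b$ are two distinct labels. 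These three colors are pairwise distinct.
\end{itemize}

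\emph{Step 2: the map $\Phi$.} Let $K$ be the dual cell decomposition of the sphere $\mathbb R^2\cup\{\infty\}$. It has a vertex for each face of $H$, an edge for each edge of $H$, and a triangular $2$-cell for each vertex of $H$, since $H$ is cubic and connected. Because $H$ has no bridges, every dual edge joins two distinct vertices of $K$. $K$ may fail to be a simplicial complex, but it is still a triangular cell decomposition, which is all that Lemma~\ref{lm:signed-cell-formula} requires.
\begin{itemize}
\item Identify the four colors with the vertices of $\Delta^3$.
\item Map each vertex of $K$ to its color, and extend linearly on edges and triangles.
\end{itemize}
By Step 1, every triangle has three distinct colors, so $\Phi$ is a homeomorphism onto a face of $\partial\Delta^3$ on each $2$-cell. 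Corollary~\ref{cor:total-orientation-sign} then gives
\[
\sum_{v\in V(H)}\varepsilon_\Phi(v)=4\deg\Phi .
\]

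\emph{Step 3: the sign $\varepsilon_\Phi(v)$.} This step is the heart of the argument.

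First, the sign depends only on the cyclic order of the three edge directions around $v$. Going counterclockwise around $v$ through the edges labeled $a,b,c$, the dual triangle has counterclockwise vertex colors $f$, $f+a$, $f+a+b$. The orientation of this triple in $\Delta^3$ is determined by the cyclic order of $(a,b,c)$. It does not depend on $f$: translating by $f$ permutes the four vertices of $\Delta^3$, but it acts on the face spanned by $f,f+a,f+a+b$ compatibly for all $f$. I would check this on the four possible values of $f$ for one fixed cyclic order.

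Second, claws and tripods have opposite cyclic orders. After an affine map, take the directions to be at angles $30^\circ,90^\circ,150^\circ$. A tripod with rays at $90^\circ,210^\circ,330^\circ$ reads $l_{90},l_{30},l_{150}$ counterclockwise. The other tripod, with rays at $270^\circ,30^\circ,150^\circ$, reads the same cyclic order. Every claw is obtained from a tripod by reversing exactly one ray, and reversing a ray swaps two directions in the cyclic order.

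Hence $\varepsilon_\Phi(v)=\pm1$ according as $v$ is a tripod or a claw, with the sign fixed globally. Therefore
\[
N_{\rm tripod}-N_{\rm claw}=\pm4\deg\Phi\equiv0\pmod 4.
\]

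The main obstacle I expect is making Step 3 rigorous, namely the claim that the orientation sign of the dual triangle depends only on the cyclic order of the edge labels and not on the base color $f$. I would first verify this directly on $\mathbb Z_2^2$ with a fixed orientation of $\partial\Delta^3$. If that becomes messy, the fallback is a local argument: adjacent triangles sharing a dual edge with the same pair of colors, but with opposite third colors, must have consistent signs.
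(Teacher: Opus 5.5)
Your proposal is correct and follows the paper's proof essentially step by step: the $\mathbb Z_2^2$ face coloring from the direction labels, the dual triangular cell decomposition mapped to $\partial\Delta^3$, claws and tripods identified with the two opposite cyclic orders of the directions, and then Corollary~\ref{cor:total-orientation-sign}. The step you flagged is settled in the paper in one line: translation by a nonzero element of $\mathbb Z_2^2$ acts on the four vertices of $\Delta^3$ as a product of two disjoint transpositions, which is an even permutation, so it preserves the orientation of $\partial\Delta^3$ and the sign of $(f,f+a,f+a+b)$ does not depend on $f$.
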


\begin{proof}
It is enough to prove the statement for connected $H$.

Color each edge of $H$ by $i\in\{1,2,3\}$ if it is parallel to
$l_i$. The union of any two color classes is a $2$-factor, so every
edge of $H$ lies on a cycle and $H$ has no bridges. Moreover, a cut
vertex in a connected cubic graph without bridges is impossible, since
one of the components obtained after its removal would be attached by a
single edge. Thus $H$ is $2$-connected, and its dual $H^*$ is a
triangular cell decomposition of $S^2$, with one triangular
$2$-cell $\sigma_v$ for every $v\in V(H)$.

Put
\[
    A=\mathbb Z_2^2=\{0,\gamma_1,\gamma_2,\gamma_3\},
    \qquad
    \gamma_1+\gamma_2+\gamma_3=0.
\]
Label a dual edge by $\gamma_i$ if the corresponding edge of $H$
has color $i$. The sum of the labels around every triangular
$2$-cell of $H^*$ is zero. Hence these labels form an
$A$-valued $1$-cocycle on $S^2$, which is a coboundary because
$H^1(S^2;A)=0$. We therefore obtain a face coloring
\[
    \kappa\colon F(H)\longrightarrow A
\]
such that
\[
    \kappa(F')-\kappa(F)=\gamma_i
\]
whenever $F$ and $F'$ are separated by an edge of color $i$.
Since every $\gamma_i$ is nonzero, this coloring is proper. Labeling
the vertices of $\Delta^3$ by the elements of $A$, the coloring extends over every triangular $2$-cell to a cellular map
\[
    \Phi_\kappa\colon |H^*|\longrightarrow\partial\Delta^3
\]
whose restriction to each triangular $2$-cell is a homeomorphism
onto a face of $\partial\Delta^3$.

Orient $H^*$ by the orientation of the plane. Relabel the directions
so that the six rays determined by $l_1,l_2,l_3$ occur
counterclockwise in the order $1,2,3,1,2,3$.
Among the eight possible choices of one ray from each line, the three
chosen rays lie in a common open half-plane precisely when their cyclic
color order is $1,2,3$. Thus the cyclic order is $1,2,3$ at a claw
and $1,3,2$ at a tripod.

If the cyclic order is $1,2,3$, then the colors of the three incident
faces, in counterclockwise order, are
\[
    \bigl(x,\;x+\gamma_2,\;x+\gamma_1\bigr)
\]
for some $x\in A$. If the cyclic order is $1,3,2$, they are
\[
    \bigl(x,\;x+\gamma_3,\;x+\gamma_1\bigr).
\]
Orient $\Delta^3$ by the ordered list $(0,\gamma_1,\gamma_2,\gamma_3)$
and give $\partial\Delta^3$ the induced boundary orientation. For
$x=0$, the first sequence, $(0,\gamma_2,\gamma_1)$,
has positive orientation, whereas the second, $(0,\gamma_3,\gamma_1)$,
has negative orientation. Translation by any element of $A$ is an
even permutation of the four vertices of the tetrahedron, so it
preserves orientation. Consequently,
\[
    \varepsilon_{\Phi_\kappa}(\sigma_v)
    =
    \begin{cases}
        +1, & v \text{ is a claw},\\
        -1, & v \text{ is a tripod}.
    \end{cases}
\]

Corollary~\ref{cor:total-orientation-sign} implies
\[
N_{\rm claw}(H)-N_{\rm tripod}(H) =   \sum_{v\in V(H)} \varepsilon_{\Phi_\kappa}(\sigma_v) = 4\deg\Phi_\kappa,
\]
which finishes the proof.
\end{proof}

\section{Questions and further directions}

Questions of the type addressed by Theorem~\ref{th:oblakov} naturally
arise when one studies how frequently the solution of a geometric
minimization problem is unique. In particular, excluding two
codirected solutions may help to control the exceptional terminal
configurations for which uniqueness fails.
In this case lengths of two different solutions coincide and 
continue to coincide when the terminals are perturbed.

Each question below has both a global and a local version. To avoid
repeating the two formulations, we use the term \emph{(local)
minimizer} to mean either a global minimizer or a locally minimizing
network, depending on the version under consideration.

\subsection{Higher-dimensional Steiner trees}

Proposition~\ref{melzakuniq} remains valid in $\mathbb R^d$, see~\cite{smith1992find}: for a
fixed terminal configuration and a fixed topology, there is at most
one locally minimal Steiner tree realizing that topology. It is
natural to ask whether the stronger uniqueness statement of
Theorem~\ref{th:oblakov} also extends beyond the plane.

\begin{question}
\label{quest:higher-dimensional-oblakov}
Let $P\subset\mathbb R^d$, $d\geq3$. Can there exist two distinct
(local) minimizers of the Steiner problem which are codirected at all
terminals?
\end{question}

By Proposition~\ref{melzakuniq}, such trees would necessarily have
different topologies. The first case to consider is $d=3$. The proof
of Theorem~\ref{th:oblakov} is essentially planar: it uses the plane
graph obtained from the symmetric difference, the cyclic order of
edges around a vertex, and the faces of the resulting embedding. None
of these ingredients has an immediate higher-dimensional analogue.

One may first restrict the question to full trees, for which every
terminal has degree one and every non-terminal vertex has degree
three.

\begin{question}
Can two distinct full (local) minimizers of the Steiner problem in
$\mathbb R^3$ be codirected at all terminals?
\end{question}

\subsection{The Gilbert--Steiner problem}

A second natural direction concerns the Gilbert--Steiner problem. We
recall its finite-network formulation. Let
\[
    P=\{x_1,\ldots,x_n\}\subset\mathbb R^d
\]
be a finite set of terminals, and prescribe numbers
\[
    b_1,\ldots,b_n\in\mathbb R\setminus\{0\},
    \qquad
    \sum_{i=1}^n b_i=0,
\]
where $b_i>0$ represents a source and $b_i<0$ represents a sink.

An admissible transport network is a finite directed straight-line
graph $G\subset\mathbb R^d$, whose vertex set contains $P$, together
with a positive flow $m_e$ on every directed edge $e$. The flows
satisfy the conservation law
\[
    \sum_{\substack{e\text{ leaving }v}}m_e
    -
    \sum_{\substack{e\text{ entering }v}}m_e
    =
    \begin{cases}
        b_i, & v=x_i,\\
        0,   & v\notin P.
    \end{cases}
\]
For a parameter $0<p<1$, the Gilbert--Steiner energy is
\[
    \mathcal E_p(G,m)
    =
    \sum_{e\in E(G)}
        m_e^p\,\mathcal H^1(e).
\]
The Gilbert--Steiner problem consists in minimizing
$\mathcal E_p$ among all admissible transport networks with the
prescribed boundary data $(P,b_1,\ldots,b_n)$. The concavity of the map
$m\mapsto m^p$ favors the merging of flows and produces branching
networks.

Recently it was shown that
Gilbert--Steiner minimizers have a combinatorial structure close to
that of Steiner trees: branching points are trivalent in the planar
case~\cite{cherkashin2025branching}, and also in arbitrary dimension in the range $p<1/2$~\cite{cherkashin2027branching}. These
are therefore the most natural settings in which to seek an analogue
of Theorem~\ref{th:oblakov}.

For two networks with the same boundary data, we say that they are
\emph{codirected at the terminals} if, for some $\varepsilon>0$,
their restrictions to $B_\varepsilon(P)$
coincide as directed weighted networks. Thus they have the same
oriented edges and the same transported flows in a neighborhood of
every terminal.

\begin{question}
\label{quest:gilbert-oblakov}
Can there exist two distinct (local) minimizers of the
Gilbert--Steiner problem, with the same boundary data, which are
codirected at all terminals?
\end{question}

The planar case is the natural first problem.

\begin{question}
For $0<p<1$, can two distinct planar (local) minimizers of the
Gilbert--Steiner problem be codirected at all terminals?
\end{question}

In higher dimensions, the same question appears particularly natural
for $p<1/2$, where all branching points are still trivalent.

Even in the trivalent case, the geometry differs essentially from the
ordinary Steiner problem. If $u_e$ denotes the unit vector along an
edge pointing away from an interior branching point, the first
variation gives the weighted equilibrium condition
\[
    \sum_{e\ni v} m_e^p u_e=0.
\]
Consequently, the angles at a branching point depend on the transported
flows and need not equal $2\pi/3$. Thus a minimal Gilbert--Steiner network does not generally have three fixed edge
directions, and the claw-tripod argument from the present paper cannot
be applied directly.

\paragraph{Acknowledgments.} I am grateful to Alexander Polyanskii, Yana Teplitskaya and Mikhail Shkolnikov for discussion.

\bibliography{main}
\bibliographystyle{plain}

\end{document}